\newtheorem{theorem}{Theorem}[section]
\newtheorem{lemma}[theorem]{Lemma}
\newtheorem{defn}[theorem]{Definition} 
\newtheorem{corollary}[theorem]{Corollary} 
\DeclareMathOperator{\arcsinh}{arcsinh}
\DeclareMathOperator{\Tr}{Tr}
\DeclareMathOperator{\Conv}{Conv}
\DeclareMathOperator{\Vol}{Vol}
\DeclareMathOperator{\dist}{dist}
\begin{document}

\title{Egan conjecture holds}
\author{Sergei Drozdov\footnote{email: \href{mailto:smdrozdov@gmail.com}{smdrozdov@gmail.com} }}

\maketitle
\begin{abstract}
Given a Euclidean simplex of dimension $n\geqslant 2$ let its radii of inscribed and circumscribed spheres be $r$ and $R$, and the distance between the centers of the inscribed and circumscribed spheres be $d.$ Then,
\begin{equation*}
(R-nr)(R+(n-2)r) \geqslant d^2.
\end{equation*}

\end{abstract}
\section{Introduction}
Given a triangle with radii of inscribed and circumscribed circles of $r$ and $R$, and the distance between the centers of the inscribed and circumscribed circles being $d$ a classical result of Chapple (1746) and Euler (1765) is:
\begin{equation}
R^2 - 2Rr = d^2.
\label{eq:chapple}
\end{equation}
The similar result for the Euclidean tetrahedron was proved independently by Grace (1918) and Danielsson (1952). Namely, if we replace the inscribed and circumscribed circles with spheres, the following inequality holds:
\begin{equation*}
(R-3r)(R+r) \geqslant d^2.
\end{equation*}

In 2014 Greg Egan conjectured in \cite{Conjecture}, that if $n \geqslant 4,$ for an $n$-dimensional Euclidean simplex the inequality takes the form:
\begin{equation}
(R-nr)(R+(n-2)r) \geqslant d^2,
\label{eganconjecture}
\end{equation}
and showed that this condition is sufficient.

In 2017 Cho and Naranjo proved in \cite{Inequality} the version of equation \eqref{eq:chapple} for a spherical triangle in $S^2:$

\begin{equation}
\sin^2 (R-r) - \sin^2r \cos^2 R=\sin^2 d.
\label{spherical}
\end{equation}

For any integer $n \geqslant 2$ we prove an inequality (Theorem \ref{th:main}), which is an analogue of both inequality \eqref{eganconjecture} and equation \eqref{spherical} for a spherical simplex in $S^n.$

It leads to inequality \eqref{eganconjecture} (Theorem \ref{th:ecproof}) for a Euclidean simplex in any dimension.
\footnote{Data sharing not applicable to this article as no datasets were generated or analysed during the current study.}

\section{The Main Result}

The inequalities above are related to a simplex and its two spheres: inscribed and circumscribed. This leads to a certain asymmetry in the notation. The spherical simplices - in contrast to Euclidean ones - have a convenient concept of \textit{polar simplex}. 
The circumscribed sphere of a simplex is complementary to the inscribed sphere of the simplex's polar (they have the same center and sum of their radii is $\frac{\pi}{2}$).

\begin{defn} \textbf{Spherical Simplex} 
Given $n \geqslant 3$, let $u_1, \dots, u_n$ be distinct vectors in the unit sphere $S^{n-1}$. Then, the set $U := \{u_i\}$ defines a spherical simplex, which has $n$ vertices.
\end{defn}

\begin{defn} \textbf{Center and Radius of a Spherical Simplex's Circumscribed Sphere} 
For a spherical simplex $U = \{u_i\}$, we define $t$ and $\alpha$ as the center and radius of the circumscribed sphere, respectively, if $t \in S^{n-1}$ and all $u_i$ satisfy $u_i \cdot t = \cos \alpha > 0$.
\end{defn}

\begin{defn} \textbf{Polar Spherical Simplices} 
Two spherical simplices $U = \{u_i\}$ and $V = \{v_i\}$ are said to be mutually polar if for all $i$ and $j$, where $1 \leqslant i < j  \leqslant n$, the vectors $u_i$ and $v_j$ are orthogonal ($u_i \cdot v_j = 0$), and $u_i \cdot v_i > 0$ for all $1 \leqslant i \leqslant n$.
\end{defn}

We consider the Theorem \ref{adjacent} known:
\begin{theorem}
\label{adjacent}
Given two mutually polar spherical simplices $U$ and $V,$ let the circumscribed sphere of $U$ have center and radius $O_u, \beta,$ and inscribed sphere of $V$ have center and radius $I_v, B$ Then:
\begin{align*}
\beta+B = \frac{\pi}{2}, \quad O_u = I_v.
\end{align*}
\end{theorem}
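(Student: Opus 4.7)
The guiding observation is that the polarity hypothesis identifies $u_i$ as the unit inward normal to the facet of $V$ opposite $v_i$: the equations $u_i \cdot v_j = 0$ for $j \neq i$ force $u_i$ to be orthogonal to the hyperplane spanned by that facet, while $u_i \cdot v_i > 0$ orients $u_i$ toward the interior of $V$. Non-degeneracy of $U$ propagates to $V$ via the matrix identity $UV^T = \operatorname{diag}(u_i \cdot v_i)$, so the incenter of $V$ is a well-defined object.

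With these normals in hand, I would reformulate both centres by the same linear system. The geodesic distance from a point $x \in S^{n-1}$ to the great hypersphere $\{y \in S^{n-1} : y \cdot u_i = 0\}$ equals $\arcsin(x \cdot u_i)$ whenever $x \cdot u_i \geqslant 0$. Consequently $I_v$ is characterised by $I_v \in S^{n-1}$ together with $I_v \cdot u_i = \sin B > 0$ for every $i$, while $O_u$ is characterised by $O_u \in S^{n-1}$ together with $O_u \cdot u_i = \cos \beta > 0$ for every $i$. Since $\cos \beta = \sin(\tfrac{\pi}{2} - \beta)$, setting $B = \tfrac{\pi}{2} - \beta$ makes the two systems coincide verbatim. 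Uniqueness of the common solution follows because $\{u_i\}$ is a basis of $\mathbb{R}^n$: the linear conditions $u_i \cdot x = c$ pin $x$ to a line through the origin, which the constraints $|x| = 1$ and $c > 0$ intersect in exactly one point. This simultaneously forces $O_u = I_v$ and $\beta + B = \tfrac{\pi}{2}$.

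The step that I expect to require the most care, though it presents no real obstruction, is verifying that the common solution is genuinely the incenter rather than merely a point equidistant from the facet hyperplanes: one needs the geodesic foot of $O_u$ on each great hypersphere $\{y \cdot u_i = 0\}$ to land inside the corresponding facet of $V$. Since all $u_i \cdot O_u > 0$, the point $O_u$ lies in the interior of $V$, and standard spherical convexity supplies this tangency; I would record it explicitly but treat it as routine.
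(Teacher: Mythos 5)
The paper gives no proof of Theorem \ref{adjacent} at all --- it is introduced with ``We consider the Theorem \ref{adjacent} known'' --- so your argument has to stand on its own, and it does. The core identification is correct: $u_i\cdot v_j=0$ for $j\neq i$ makes $u_i$ the normal to the supporting great hypersphere of the facet of $V$ opposite $v_i$, and $u_i\cdot v_i>0$ orients it toward the interior (any interior point $x=\sum_j a_jv_j$ with $a_j>0$ has $u_i\cdot x=a_i\,u_i\cdot v_i>0$), so $O_u$ and $I_v$ both solve a system of the form $u_i\cdot x=c$ for all $i$ with $c>0$ and $|x|=1$; uniqueness of the solution pair $(x,c)$ because $\{u_i\}$ is a basis then yields $O_u=I_v$ and $\cos\beta=\sin B$ simultaneously, and since $\beta,B\in(0,\tfrac{\pi}{2})$ this gives $\beta+B=\tfrac{\pi}{2}$. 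Two small corrections. First, with vertices as columns the identity is $U^TV=\operatorname{diag}(u_i\cdot v_i)$, not $UV^T$; and you do not need non-degeneracy of $U$ as a separate input --- polarity alone forces independence of each family, since $\sum_j c_jv_j=0$ dotted with $u_i$ leaves only $c_i(u_i\cdot v_i)=0$. Second, the tangency issue you defer (that the geodesic foot of $I_v$ lands inside each facet, not merely on its great hypersphere) only matters if one insists on defining the inradius as the distance to the facets themselves; with the equidistant-from-supporting-hyperspheres characterisation of the incenter, which is the one the polar correspondence naturally uses, your linear-system argument is already complete, and the containment of the tangency points in the facets follows from the standard fact that the inscribed ball lies inside the (spherically convex) simplex. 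Since the paper supplies no argument to compare against, your proof is a welcome addition rather than an alternative route.
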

Let's prove the main result of this paper:
\begin{theorem}
\label{th:main}
Given two mutually polar spherical simplices $U$ and $V$ in $S^{n-1}$ $(n \geqslant 3)$, let $\alpha$ be the distance between their circumscribed sphere centers, $\beta$ and $\gamma$ be their respective circumscribed sphere radii, and $b = \tan \beta$, $c = \tan \gamma$. Then, the following inequality holds:
\begin{equation*}
\sqrt{ (bc-1)^2  \cos ^ 2 \alpha - (b + c)^2 \sin^2 \alpha} \geqslant n - 2.
\end{equation*}
In the case where $n = 3$, the inequality becomes an equality.
\end{theorem}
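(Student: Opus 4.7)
My plan is to recast the inequality in a cleaner algebraic form and reduce it to a single inequality about the Gram matrix $G = U^T U$; the base case $n=3$ then comes directly from the Cho--Naranjo identity \eqref{spherical}. The key algebraic simplification proceeds by replacing $\cos^2\alpha$ by $1 - \sin^2\alpha$, expanding
\[
(bc-1)^2\cos^2\alpha - (b+c)^2\sin^2\alpha \;=\; (bc-1)^2 - (1+b^2)(1+c^2)\sin^2\alpha,
\]
and using the identity $(1+b^2)(1+c^2) - (bc-1)^2 = (b+c)^2$ together with the abbreviation $S := \sec\beta\sec\gamma\cos\alpha$ (so that $(1+b^2)(1+c^2)\sin^2\alpha = (1+b^2)(1+c^2) - S^2$) to rewrite the theorem as
\[
S^2 \;\geqslant\; (n-2)^2 + (b+c)^2, \qquad\text{i.e.,}\qquad (S-b-c)(S+b+c) \;\geqslant\; (n-2)^2.
\]

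To translate $S$, $b$, $c$ into invariants of $G$, set $A = \operatorname{diag}(u_i \cdot v_i)$; the polar condition $U^T V = A$ together with $|v_i|=1$ fixes $a_i^{-2} = (G^{-1})_{ii}$ and $V^T V = A G^{-1} A$, while solving the circumcenter equations $U^T t = \cos\beta\,\mathbf{1}$ and $V^T s = \cos\gamma\,\mathbf{1}$ (and using $U^{-1}(V^T)^{-1} = A^{-1}$) yields
\[
\sec^2\beta = \mathbf{1}^T G^{-1}\mathbf{1}, \qquad \sec^2\gamma = \mathbf{1}^T A^{-1} G A^{-1} \mathbf{1}, \qquad S = \mathbf{1}^T A^{-1}\mathbf{1} = \sum_i\sqrt{(G^{-1})_{ii}}.
\]
Thus the entire theorem becomes a single inequality about $G$. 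For $n=3$ a direct check shows that after these substitutions the inequality is equivalent to the Cho--Naranjo identity \eqref{spherical} applied to either $U$ or its polar (using Theorem \ref{adjacent} to identify $\pi/2 - \beta$ with the inradius of the polar), confirming equality in that case.

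The main obstacle is proving $S^2 \geqslant (n-2)^2 + (b+c)^2$ for $n \geqslant 4$. My primary strategy is induction on $n$ via vertex deletion: removing $u_n$ from $U$ produces a spherical $(n-2)$-simplex on the great sphere $\operatorname{span}\{u_1,\dots,u_{n-1}\} \cap S^{n-1}$, whose Gram matrix is the principal $(n-1)\times(n-1)$ submatrix $G'$ of $G$, and Schur's complement formula governs how $(G^{-1})_{ii}$, $\mathbf{1}^T G^{-1}\mathbf{1}$, and $S$ change upon passage from $G$ to $G'$. The sharpest technical point will be to show that the increment $(n-2)^2 - (n-3)^2 = 2n-5$ is absorbed in the right direction by the induced changes in $S^2$ and $(b+c)^2$; as a useful sanity check, a direct calculation shows that any regular spherical simplex has $\alpha = 0$ and $bc = n-1$, realizing equality at every $n$, which suggests the inductive monotonicity ought to be tight at regular configurations. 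If the induction turns out to be too coupled to push through cleanly, I would fall back on a variational argument: fix $\alpha,\beta,\gamma$ and minimize $S^2 - (b+c)^2$ over admissible Gram matrices $G$, identify the minimizer (which the $\alpha=0$, $bc=n-1$ computation suggests is of regular type), and verify that the minimum value equals $(n-2)^2$.
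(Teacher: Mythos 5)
Your algebraic reformulation is correct and genuinely clean: with $S:=\sec\beta\sec\gamma\cos\alpha$ one indeed has $(bc-1)^2\cos^2\alpha-(b+c)^2\sin^2\alpha=S^2-(b+c)^2$ via $(bc-1)^2+(b+c)^2=(1+b^2)(1+c^2)$; the Gram-matrix identities $\sec^2\beta=\mathbf{1}^TG^{-1}\mathbf{1}$, $a_i=1/\sqrt{(G^{-1})_{ii}}$, $\sec^2\gamma=\mathbf{1}^TA^{-1}GA^{-1}\mathbf{1}$ and $S=\mathbf{1}^TA^{-1}\mathbf{1}=\sum_i\sqrt{(G^{-1})_{ii}}$ all check out; as a bonus, $S=\sum_i 1/a_i>0$ gives $\cos\alpha>0$ for free, which the paper needs a separate geometric lemma ($\beta+\gamma>\alpha+\frac{\pi}{2}$) to establish; and your sanity check is right --- every regular pair has $\alpha=0$, $bc=n-1$, hence equality for \emph{every} $n$. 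But the proposal stops exactly where the theorem begins. The inequality $S^2\geqslant(n-2)^2+(b+c)^2$, i.e. $\bigl(\sum_i\sqrt{(G^{-1})_{ii}}\bigr)^2\geqslant(n-2)^2+\bigl(\sqrt{\mathbf{1}^TG^{-1}\mathbf{1}-1}+\sqrt{\mathbf{1}^TA^{-1}GA^{-1}\mathbf{1}-1}\bigr)^2$, is the entire content for $n\geqslant 4$, and neither of your strategies is executed. The induction via vertex deletion is only announced: no increment inequality is formulated, and your own observation that equality holds at regular configurations in every dimension means any inductive step must be \emph{exactly tight} along a whole family, leaving no slack for crude estimates --- the step you flag as ``the sharpest technical point'' is the theorem itself. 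Note also that the facet's polar within the great subsphere is not obtained by deleting $v_n$ from $V$ (the remaining $v_i$ do not lie in $\operatorname{span}\{u_1,\dots,u_{n-1}\}$); one must project and renormalize, and how $\alpha',\beta',\gamma'$ of the facet pair relate to the original data is precisely the uncontrolled part.

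The variational fallback is worse than incomplete: as stated it is vacuous. Since $S^2-(b+c)^2=\sec^2\beta\sec^2\gamma\cos^2\alpha-(\tan\beta+\tan\gamma)^2$ depends on $(\alpha,\beta,\gamma)$ alone, ``fix $\alpha,\beta,\gamma$ and minimize $S^2-(b+c)^2$ over admissible $G$'' minimizes a constant; the real question is which triples $(\alpha,\beta,\gamma)$ are realizable by a mutually polar pair, which is the theorem in disguise. For comparison, the paper closes this gap non-inductively: it factors $U^TA^{-1}V=U^TB(CAB)^{-1}CV$, applies a Lorentz (hyperbolic) SVD $CAB=R_tDR_s$ whose gap is $K-L=\sqrt{(bc-1)^2\cos^2\alpha-(b+c)^2\sin^2\alpha}$, transports both simplices by $J$-orthogonal maps onto the single cone $x^TJDx=0$, and proves $\Tr(JD)=K-L-(n-2)\geqslant 0$ by combining a Cauchy--Schwarz sign bound $M(\overline{u}_i,\overline{v}_i)\geqslant 0$ with the biorthogonal resolution-of-identity formula $\Tr M=\sum_i M(\overline{u}_i,\overline{v}_i)/(\overline{u}_i\cdot\overline{v}_i)$. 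That trace identity is the missing idea in your plan: it converts the global inequality into $n$ pointwise sign conditions, exactly the mechanism your induction and variational sketches lack; your Gram-matrix reformulation is compatible with it and could be kept, but without such an ingredient the proposal is a reformulation plus a conjecture, not a proof.
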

\begin{proof}
\begin{enumerate}[leftmargin = *]

\item \textbf{Three weak inequalities:}
First, by construction 
\begin{equation}\frac{\pi}{2} > \beta, \gamma > 0.\label{eq:betagamma}\end{equation} 
Now let's prove a Lemma
\begin{lemma}
In the notation as above
\begin{equation}
\beta + \gamma > \alpha + \frac{\pi}{2}.
\label{eq:betagamma2}
\end{equation}
\end{lemma}
\begin{proof}
Let $O, I$ be the circumcenters of $U, V$. Let point $t$ be any point such that $\angle(t,I) = \frac{\pi}{2} - \gamma.$  We may say that $t$ lies on the \textit{inscribed} sphere of the simplex $U$.

We will prove that $t,$ and the whole inscribed sphere of $U$ lies inside the circumscribed sphere of $U.$

For any $i = 1,\dots,n:$
\begin{equation*}
\angle(t,v_i) \leqslant \angle(t,I) + \angle(I,v_i) = \gamma + \left(\frac{\pi}{2} - \gamma\right) = \frac{\pi}{2},
\end{equation*}
and the equality is only attained if $t,I,v_i$ lie on the same big circle. So, the equality can be attained for not more than one $i.$ Without any loss of generality, lets assume:
\begin{align*}
\begin{cases}
\angle(t,v_i) < \frac{\pi}{2}, i = 1,\dots,n-1\\
\angle(t,v_n) \leqslant \frac{\pi}{2}.
\end{cases}
\end{align*}
Vectors $u_i,$ vertices of $U,$ are a basis in $\mathbb{R}^n,$ so we can expand $t$:
\begin{equation*}
t = x_1u_1 + \dots + x_nu_n, \text{ for some } x_i \in \mathbb{R}.
\end{equation*}
Multiplying by $v_i$ for some $i$ we get:
\begin{equation*}
tv_i = x_i \cdot u_iv_i, \text{ so}
\end{equation*}
\begin{equation*}
x_i \cdot u_iv_i = \cos\left(\angle(t,v_i)\right)
\end{equation*}

As $u_iv_i>0, \forall i$ we get:

\begin{align*}
&x_i >0, i = 1,\dots,n-1\\
&x_n \geqslant 0.
\end{align*}

As $|t| = 1,$ we can expand:
\begin{align*}
1& = (x_1u_1 + \dots + x_nu_n)^2 = \sum\limits_{1\leqslant i, j \leqslant n} x_ix_j  \cdot u_iu_j = x_1x_2 \cdot u_1u_2+\sum\limits_{(i,j) \neq (1,2)} x_ix_j  \cdot u_iu_j\leqslant\\
&\leqslant x_1x_2 \cdot u_1u_2+\sum\limits_{(i,j) \neq (1,2)} x_ix_j<x_1x_2+\sum\limits_{(i,j) \neq (1,2)} x_ix_j = (x_1+\dots+x_n)^2.\\
\end{align*}

So, $x_1+\dots+x_n > 1.$
Now let's expand $t\cdot O$

\begin{equation*}
t\cdot O = x_1 u_1O + \dots + x_n u_nO = (x_1 + \dots +x_n) \cos\beta > \cos\beta.
\end{equation*}

So, 
\begin{equation*}
\angle(t,O) < \beta.
\end{equation*}

This holds for any point $t$ on the sphere of radius $\frac{\pi}{2}-\gamma,$ with the center $I.$ As $\angle(O,I) = \alpha,$ we get:

\begin{equation*}
\alpha + \left(\frac{\pi}{2}-\gamma\right)<\beta.
\end{equation*}
\end{proof}

Combining inequalities \eqref{eq:betagamma} and \eqref{eq:betagamma2} we get:
\begin{equation}
\pi > \beta + \gamma > \alpha + \frac{\pi}{2}. \label{eq:ineq1}
\end{equation}
So,
\begin{equation}
\frac{\pi}{2} > \alpha,\beta,\gamma > 0, \cos \alpha >0, \text{ and} \label{eq:ineq2}
\end{equation}

\begin{equation}
bc = \tan\beta \tan\gamma > \tan\beta\tan\left(\frac{\pi}{2}-\beta\right) = 1. \label{eq:ineq3}
\end{equation}

\item \textbf{Matrix notation:}
Our $S^{n-1}$ is contained in $\mathbb{R}^n$. There is one more nice property of spherical simplices, compared to the Euclidean ones.
In coordinates they are represented not as $(n+1)\times n$ matrices, but as $n \times n $ matrices.

Let $O_u, O_v$ be the circumcenters of $U, V$. Since $\angle (O_u, O_v) = \alpha,$ there exists an orthogonal basis $\mathcal{E} \:= \{e_1, \dots, e_n\}$, such that 
\begin{equation*}
\begin{cases}
O_u = e_1 \\
O_v = e_1\cos\alpha+e_2\sin\alpha.
\end{cases}
\end{equation*}

Define an $n \times n$ matrix:
\begin{equation*}
A:= \begin{pmatrix}
  \cos \alpha & \sin \alpha  & \cdots & 0 \\
  -\sin \alpha & \cos \alpha  & \cdots & 0 \\
  \vdots & \vdots  & \ddots & \vdots \\
  0 & 0 & \cdots & 1 \\
\end{pmatrix},
\end{equation*}
it is orthogonal. So $A\mathcal{E} = \{Ae_1, \dots, Ae_n\}$ is also an orthogonal basis. Also, $Ae_1 = O_v.$

Let the $i$-th vertex of $U$ have coordinates $(u_{i1},\dots,u_{in})$ in the basis $\mathcal{E}$.
Define an $n \times n$ matrix $U$ with each column equal to the coordinates of one vertex of simplex $U$.
So, 

\begin{equation*}
U = \begin{pmatrix}
  u_{11} &  \cdots & u_{n1} \\
  \vdots  & \ddots & \vdots \\
  u_{1n}  & \cdots & u_{nn} \\
\end{pmatrix}.
\end{equation*}
Define the similar matrix $V = \begin{pmatrix}
  v_{11} &  \cdots & v_{n1} \\
  \vdots  & \ddots & \vdots \\
  v_{1n}  & \cdots & v_{nn} \\
\end{pmatrix}$ containing the vertex coordinates of the second simplex, $V,$ in the basis $A\mathcal{E}.$
Then the coordinates of this simplex vertices in the basis $\mathcal{E}$ is $V$ times the inverse of $A:$

\begin{equation*}
A^{-1}V
\end{equation*}

\begin{itemize}
\item 
The mutual polarity of the two simplices, represented in the basis $\mathcal{E}$ leads to the fact that the matrix
\begin{equation*}
U^T \cdot A^{-1} V
\end{equation*}
is a diagonal matrix with all elements positive.
\end{itemize}

From the definition of the circumcenter:
\begin{equation}
\begin{cases}
u_{i1} = \cos \beta, \quad \forall i = 1, \dots, n\\
v_{i1} = \cos \gamma, \quad \forall i = 1, \dots, n,
\label{eq:cosbeta}
\end{cases}
\end{equation}
and so
\begin{equation}
\begin{cases}
u_{i2}^2 + \dots + u_{in}^2 = \sin^2 \beta, \quad \forall i = 1, \dots, n\\
v_{i2}^2 + \dots + v_{in}^2 = \sin^2 \gamma, \quad \forall i = 1, \dots, n,
\label{eq:sinbeta}
\end{cases}
\end{equation}
so 
\begin{equation}
\begin{cases}
b^2u_{i1}^2 -u_{i2}^2 - \dots - u_{in}^2 = \tan^2\beta\cos^2\beta-\sin^2 \beta = 0, \quad \forall i = 1, \dots, n\\
c^2v_{i1}^2 -v_{i2}^2 - \dots - v_{in}^2 = \tan^2\gamma\cos^2\gamma-\sin^2 \gamma = 0, \quad \forall i = 1, \dots, n.
\label{eq:btan}
\end{cases}
\end{equation}

Define the following diagonal $n \times n$ matrices:
\begin{equation*}
B:= \begin{pmatrix}
  b & 0  & \cdots & 0 \\
  0 & 1  & \cdots & 0 \\
  \vdots & \vdots  & \ddots & \vdots \\
  0 & 0 & \cdots & 1 \\
\end{pmatrix}, \quad 
C:= \begin{pmatrix}
  c & 0  & \cdots & 0 \\
  0 & 1  & \cdots & 0 \\
  \vdots & \vdots  & \ddots & \vdots \\
  0 & 0 & \cdots & 1 \\
\end{pmatrix}, \quad
J:= \begin{pmatrix}
  1 & 0  & \cdots & 0 \\
  0 & -1  & \cdots & 0 \\
  \vdots & \vdots  & \ddots & \vdots \\
  0 & 0 & \cdots & -1 \\
\end{pmatrix}.
\end{equation*}

\begin{itemize}
\item Condition \eqref{eq:btan} leads to the fact the matrices
\begin{equation}
U^T \cdot JB^2 \cdot U, \quad  V^T \cdot JC^2 \cdot V
\label{eq:polarity}
\end{equation}
have all diagonal elements equal 0. We can also write these matrices as 
\begin{equation*}
U^TB \cdot J \cdot BU, \quad  V^TC \cdot J \cdot CV.
\end{equation*}

\end{itemize}

Let's now rewrite the matrix from the polarity condition:
\begin{equation}
U^T A^{-1} V = U^T B B^{-1 }A^{-1} C^{-1}C V = U^T B \cdot \left(CAB \right) ^{-1}\cdot C V.
\label{eq:pause}
\end{equation}

In order to continue writing this expansion, we will focus on the matrix  $CAB $ and its so-called
Lorentz singular value decomposition.

\item \textbf{Lorentz singular value decomposition:}
By construction 
\begin{equation*}
CAB = \begin{pmatrix}
  bc \cos \alpha & c \sin \alpha  & \cdots & 0 \\
  -b \sin \alpha & \cos \alpha   & \cdots & 0 \\
  \vdots & \vdots  & \ddots & \vdots \\
  0 & 0 & \cdots & 1 \\
\end{pmatrix}.
\end{equation*}

The Lorentz singular value decomposition which we would like to apply to its top-left $2 \times 2$ corner is a representation in the form $\begin{pmatrix}
 \cosh t & \sinh t \\
 \sinh t & \cosh t \\
\end{pmatrix} 
\begin{pmatrix}
 K & 0 \\
 0 & L \\
\end{pmatrix}
\begin{pmatrix}
 \cosh s & \sinh s \\
 \sinh s & \cosh s \\
\end{pmatrix}, \quad K,L,s,t \in \mathbb{R}.$

Notice that not every $2 \times 2$ matrix has this form, for example 
$\begin{pmatrix}
 0 & 1 \\
 1 & 0 \\
\end{pmatrix}$ hasn't. Indeed, if it has, then this would imply that 
\begin{equation*}
\begin{pmatrix}
 0 & 1 \\
 1 & 0 \\
\end{pmatrix}
 = 
\begin{pmatrix}
 \cosh \tau & \sinh \tau \\
 \sinh \tau & \cosh \tau \\
\end{pmatrix} 
\begin{pmatrix}
 A & 0 \\
 0 & B \\
\end{pmatrix}
\begin{pmatrix}
 \cosh \sigma & \sinh \sigma \\
 \sinh \sigma & \cosh \sigma \\
\end{pmatrix}, \text{ so}
\end{equation*}

\begin{equation*}
\begin{pmatrix}
 A & 0 \\
 0 & B \\
\end{pmatrix}
 = 
\begin{pmatrix}
 \cosh \tau & -\sinh \tau \\
 -\sinh \tau & \cosh \tau \\
\end{pmatrix} 
\begin{pmatrix}
 0 & 1 \\
 1 & 0 \\
\end{pmatrix}
\begin{pmatrix}
 \cosh \sigma & -\sinh \sigma \\
 -\sinh \sigma & \cosh \sigma \\
\end{pmatrix} = 
\begin{pmatrix}
 -\sinh (\tau+\sigma) &  \cosh (\tau+\sigma) \\
\cosh (\tau+\sigma)&  -\sinh (\tau+\sigma) \\
\end{pmatrix} ,
\end{equation*}
which cannot be true for real $\tau, \sigma.$

Akin to the regular SVD, the Lorentz singular value decomposition (described in \cite{Lorentz}) transforms a given matrix into a diagonal one not by multiplying it by orthogonal matrices, but instead by multiplying it by $J-$orthogonal matrices. 
The concept has been thoroughly investigated by V. Šego, as presented in \cite{vedran-sego}. We suggest \cite{vedran-sego}, \cite{Lorentz2}, \cite{topo}, \cite{jort} for further details, and here we will simply present the exact decomposition without any use of literature. Let's prove a lemma:

\begin{lemma}
For any $X,Y,Z,T \in \mathbb{R}, $ such that
\begin{equation*}
\begin{cases}
X-T > |Z-Y|>0\\
X+T > |Z+Y|>0\\
XT-ZY > 0
\end{cases}
\end{equation*}
there exist $K, L, t, s \in \mathbb{R},$ such that
\begin{equation*}
\begin{pmatrix}
 X & Y \\
 Z & T \\
\end{pmatrix}
 = \begin{pmatrix}
 \cosh t & \sinh t \\
 \sinh t & \cosh t \\
\end{pmatrix} 
\begin{pmatrix}
 K & 0 \\
 0 & L \\
\end{pmatrix}
\begin{pmatrix}
 \cosh s & \sinh s \\
 \sinh s & \cosh s \\
\end{pmatrix}, \text{ and}
\end{equation*}

\begin{equation*}
\begin{cases}
K - L = \sqrt{(X-T)^2 - (Z-Y)^2}\\
K>L>0\\
\end{cases}.
\end{equation*}

\end{lemma}

\begin{proof}
There exist $K,L \in \mathbb{R}$ such that 
\begin{equation}
\begin{cases}
K + L = \sqrt{(X+T)^2 - (Z+Y)^2}\\
K - L = \sqrt{(X-T)^2 - (Z-Y)^2}.
\label{eq:sqrts}
\end{cases}
\end{equation}

Choose $t,s,$ such that 
\begin{equation}
\begin{cases}
t+s = \arcsinh \frac{Z+Y}{K+L}\\
t-s = \arcsinh \frac{Z-Y}{K-L}
\label{eq:arcsinh}
\end{cases}
\end{equation}.

From equation \eqref{eq:arcsinh} we have 
\begin{equation}
\begin{cases}
Z+Y = \sinh (t+s)(K+L)\\
Z-Y = \sinh (t-s)(K-L) \label{eq:sinh}.
\end{cases}
\end{equation}

From $X+T>0,X-T>0,$ and from equation \eqref{eq:sqrts}:
\begin{equation*}
\begin{cases}
X+T = \sqrt{(K+L)^2 + (Z+Y)^2}\\
X-T = \sqrt{(K-L)^2 + (Z-Y)^2}.
\end{cases}
\end{equation*}

From equation \eqref{eq:sinh} and since $K+L>0,K-L>0,$ we get:
\begin{equation*}
\sqrt{(K+L)^2 + (Z+Y)^2} = \sqrt{(K+L)^2 +  \sinh^2 (t+s)(K+L)^2} = \cosh(t+s) |K+L| = \cosh(t+s) (K+L),
\end{equation*}
\begin{equation*}
\sqrt{(K-L)^2 + (Z-Y)^2} = \sqrt{(K-L)^2 +  \sinh^2 (t-s)(K-L)^2} = \cosh(t-s) |K-L| = \cosh(t-s) (K-L).
\end{equation*}
So,
\begin{equation}
\begin{cases}
X+T = \cosh(t+s) (K+L)\\
X-T = \cosh(t-s) (K-L). \label{eq:cosh}
\end{cases}
\end{equation}

Now, combining equations \eqref{eq:sinh} and \eqref{eq:cosh} we get:
\begin{align*}
&\begin{pmatrix}
 \cosh t & \sinh t \\
 \sinh t & \cosh t \\
\end{pmatrix} 
\begin{pmatrix}
 K & 0 \\
 0 & L \\
\end{pmatrix}
\begin{pmatrix}
 \cosh s & \sinh s \\
 \sinh s & \cosh s \\
\end{pmatrix} = \\
=&\begin{pmatrix}
 \cosh t & \sinh t \\
 \sinh t & \cosh t \\
\end{pmatrix} 
\left(
\frac{K+L}{2}
\begin{pmatrix}
 1 & 0 \\
 0 & 1 \\
\end{pmatrix}
+
\frac{K-L}{2}
\begin{pmatrix}
 1 & 0 \\
 0 & -1 \\
\end{pmatrix}
\right)
\begin{pmatrix}
 \cosh s & \sinh s \\
 \sinh s & \cosh s \\
\end{pmatrix} = \\
=\frac{K+L}{2}
&\begin{pmatrix}
 \cosh t & \sinh t \\
 \sinh t & \cosh t \\
\end{pmatrix} 
\begin{pmatrix}
 \cosh s & \sinh s \\
 \sinh s & \cosh s \\
\end{pmatrix}+
\frac{K-L}{2}
\begin{pmatrix}
 \cosh t & -\sinh t \\
 \sinh t & -\cosh t \\
\end{pmatrix} 
\begin{pmatrix}
 \cosh s & \sinh s \\
 \sinh s & \cosh s \\
\end{pmatrix} = \\
=\frac{K+L}{2}
&\begin{pmatrix}
 \cosh (t + s) & \sinh (t + s) \\
 \sinh (t + s) & \cosh (t + s) \\
\end{pmatrix}+
\frac{K-L}{2}
\begin{pmatrix}
 \cosh (t - s) & -\sinh (t - s) \\
 \sinh (t - s) & -\cosh (t - s) \\
\end{pmatrix} = \\
=&\begin{pmatrix}
\frac{X+T}{2} & \frac{Z+Y}{2} \\
 \frac{Z+Y}{2} & \frac{X+T}{2} \\
\end{pmatrix}
+
\begin{pmatrix}
\frac{X-T}{2} & -\frac{Z-Y}{2} \\
 \frac{Z-Y}{2} & -\frac{X-T}{2} \\
\end{pmatrix} = 
\begin{pmatrix}
X& Y \\
 Z & T \\
\end{pmatrix}.
\end{align*}

From equation \eqref{eq:sqrts} we get $K-L >0, K>0.$

Also, 
\begin{equation*}
XT-ZY = 
\det
\begin{pmatrix}
X& Y \\
 Z & T \\
\end{pmatrix} = 
\det
\begin{pmatrix}
K & 0 \\
 0 & L \\
\end{pmatrix} = KL, \text{ so }
\end{equation*}
\begin{equation*}
L>0.
\end{equation*}
\end{proof}

Let's check the Lemma's assumptions for the matrix
\begin{equation*}
\begin{pmatrix}
 X & Y \\
 Z & T \\
\end{pmatrix}:= 
\begin{pmatrix}
  bc \cos \alpha & c \sin \alpha   \\
  -b \sin \alpha & \cos \alpha \\
\end{pmatrix}.
\end{equation*}
From inequality \eqref{eq:ineq1} it follows that  $\tan\left(\beta+\gamma-\frac{\pi}{2}\right) > \tan\alpha,$ or 
$-\frac{1}{\tan(\beta + \gamma)}> \tan\alpha,$ or

\begin{equation*}
\frac{bc-1}{b+c}> \tan\alpha.
\end{equation*}

From inequality \eqref{eq:ineq2}, we have $\cos \alpha>0, $ and so

\begin{equation*}
(bc-1) \cos \alpha > (b+c) \sin \alpha.
\end{equation*}

So $X-T>|Z-Y|.$ Also, as $Z$ and $Y$ have different signs:

\begin{equation*}
X+T > X-T > |Z-Y| > |Z+Y|.
\end{equation*}

Also
\begin{equation*}
XT-ZY = bc\cos^2\alpha+bc\sin^2\alpha =bc>0.
\end{equation*}

So Lemma holds, and it leads to:
\begin{equation*}
K - L = \sqrt{(X-T)^2 - (Z-Y)^2} = \sqrt{(bc-1)^2\cos^2\alpha - (b+c)^2\sin^2\alpha}, \text{ and}
\end{equation*}
\begin{equation*}
K>L>0.
\end{equation*}
\item \textbf{Replacing two cones with one:}
 
 Define the following $n \times n$ matrices:
\begin{equation*}
D:= \begin{pmatrix}
  K & 0  & \cdots & 0 \\
  0 & L  & \cdots & 0 \\
  \vdots & \vdots  & \ddots & \vdots \\
  0 & 0 & \cdots & 1 \\
\end{pmatrix}, \quad
\sqrt{D^{-1}}:= \begin{pmatrix}
  \frac{1}{\sqrt{K}} & 0  & \cdots & 0 \\
  0 & \frac{1}{\sqrt{L}}  & \cdots & 0 \\
  \vdots & \vdots  & \ddots & \vdots \\
  0 & 0 & \cdots & 1 \\
\end{pmatrix};
\end{equation*}
\begin{equation*}
\forall x \in \mathbb{R},\quad
R_x:= \begin{pmatrix}
  \cosh x & \sinh x  & \cdots & 0 \\
 \sinh x & \cosh x  & \cdots & 0 \\
  \vdots & \vdots  & \ddots & \vdots \\
  0 & 0 & \cdots & 1 \\
\end{pmatrix}.
\end{equation*}

We have shown above, that

\begin{equation*}
CAB = R_t D R_s,
\end{equation*}
and that $\sqrt{D^{-1}}$ has its diagonal entries real (and positive).

Let's continue expanding equation \eqref{eq:pause}:

\begin{align*}
U^T A^{-1} V = U^T B \left(CAB \right) ^{-1}C V = U^T B \left(R_t D R_s\right) ^{-1}C V = U^TBR_{-s}D^{-1}R_{-t}CV = \\
 = (U^TBR_{-s}\sqrt{D^{-1}})(\sqrt{D^{-1}}R_{-t}CV) = (\sqrt{D^{-1}}R_{-s}BU)^T(\sqrt{D^{-1}}R_{-t}CV).
\end{align*}

From $\cosh^2 x - \sinh^2 x = 1$ it follows that
\begin{equation*}
R_x JR_x = J, \quad \forall  x \in \mathbb{R}.
\end{equation*}

Let's rewrite the matrices in equation \eqref{eq:polarity}:

\begin{align*}
U^T \cdot JB^2 \cdot U & = U^T B\cdot J\cdot B U = U^T BR_{-s}\cdot J\cdot R_{-s}BU = U^TBR_{-s}\sqrt{D^{-1}}JD\sqrt{D^{-1}}R_{-s}BU = \\
& = (U^TBR_{-s}\sqrt{D^{-1}})JD(\sqrt{D^{-1}}R_{-s}BU) = (\sqrt{D^{-1}}R_{-s}BU)^TJD(\sqrt{D^{-1}}R_{-s}BU)\text{, and}
\end{align*}

\begin{equation*}
V^T \cdot JC^2 \cdot V = (\sqrt{D^{-1}}R_{-t}CV)^TJD(\sqrt{D^{-1}}R_{-t}CV).
\end{equation*}

Finally, let 
\begin{equation*}
\begin{cases}
U':= \sqrt{D^{-1}}R_{-s}BU\\
V':= \sqrt{D^{-1}}R_{-t}CV,
\end{cases}
\end{equation*}
which gives
\begin{equation*}
\begin{cases}
U^T A^{-1} V = U'^{T}V' \\
U^T \cdot JB^2 \cdot U = U'^{T} \cdot JD \cdot U'\\
V^T \cdot JC^2 \cdot V = V'^{T} \cdot JD \cdot V'.
\end{cases}
\end{equation*}

Let's notice that 
\begin{equation*}
\Tr (JD) = K - L - (n-2) = \sqrt{(bc-1)^2\cos^2\alpha - (b+c)^2\sin^2\alpha} - (n-2).
\end{equation*}

We will prove that $\Tr (JD) $ is non-negative, and this will finalise the theorem. However, to be able to operate with
$U', V'$ we should prove one more condition concerning them.

\item \textbf{Columns of \( U' \), \( V' \) belong to the same semi-space.}
As defined above,
\begin{equation*}
  U' = \sqrt{D^{-1}}R_{-s}B \cdot U, \quad V' = \sqrt{D^{-1}}R_{-t}C \cdot V.
\end{equation*}
Let's prove that the top row of the matrix $U'$ contains only positive elements.
Let's expand \( \sqrt{D^{-1}}R_{-s}B \), its only non-trivial elements are in the top-left \( 2 \times 2 \) corner.
\begin{align*}
  \sqrt{D^{-1}}R_{-s}B & = \begin{pmatrix}
  \frac{1}{\sqrt{K}} & 0 & \cdots & 0 \\
  0 & \frac{1}{\sqrt{L}} & \cdots & 0 \\
  \vdots & \vdots & \ddots & \vdots \\
  0 & 0 & \cdots & 1
  \end{pmatrix}
  \begin{pmatrix}
  \cosh s & -\sinh s & \cdots & 0 \\
  -\sinh s & \cosh s & \cdots & 0 \\
  \vdots & \vdots & \ddots & \vdots \\
  0 & 0 & \cdots & 1
  \end{pmatrix}
  \begin{pmatrix}
  b & 0 & \cdots & 0 \\
  0 & 1 & \cdots & 0 \\
  \vdots & \vdots & \ddots & \vdots \\
  0 & 0 & \cdots & 1
  \end{pmatrix} =\\
  & = \begin{pmatrix}
  \frac{b \cosh s}{\sqrt{K}} & -\frac{\sinh s}{\sqrt{K}} & \cdots & 0 \\
  -\frac{b \sinh s}{\sqrt{L}} & \frac{\cosh s}{\sqrt{L}} & \cdots & 0 \\
  \vdots & \vdots & \ddots & \vdots \\
  0 & 0 & \cdots & 1
  \end{pmatrix},
\end{align*}
so
\begin{align*}
  \sqrt{D^{-1}}R_{-s}B \cdot U & = \begin{pmatrix}
  \frac{b \cosh s}{\sqrt{K}} & -\frac{\sinh s}{\sqrt{K}} & \cdots & 0 \\
  -\frac{b \sinh s}{\sqrt{L}} & \frac{\cosh s}{\sqrt{L}} & \cdots & 0 \\
  \vdots & \vdots & \ddots & \vdots \\
  0 & 0 & \cdots & 1
  \end{pmatrix}
  \begin{pmatrix}
  u_{11} & \cdots & u_{1n} \\
  \vdots & \ddots & \vdots \\
  u_{n1} & \cdots & u_{nn}
  \end{pmatrix}=
 \\
  & = 
  \begin{pmatrix}
  \frac{b \cosh s}{\sqrt{K}} u_{11} - \frac{\sinh s}{\sqrt{K}} u_{12} & \cdots & \frac{b \cosh s}{\sqrt{K}} u_{n1} - \frac{\sinh s}{\sqrt{K}} u_{n2}\\
  \vdots & \vdots & \vdots
  \end{pmatrix}.
\end{align*}

Let's prove that each element of the top row of this matrix is larger than \( 0 \). First, from equations \eqref{eq:cosbeta} and \eqref{eq:sinbeta}:
\begin{equation*}
  bu_{i1} = \tan \beta \cos \beta = \sin \beta > |u_{i2}|, \quad \forall i \in \{1, \ldots, n\}.
  \end{equation*}
Second,
\begin{equation*}
  \cosh s > |\sinh s|.
\end{equation*}
Multiplying the two, and dividing by \( \sqrt{K} \):
\begin{equation*}
  \frac{b \cosh s}{\sqrt{K}} u_{i1} > \left|\frac{\sinh s}{\sqrt{K}} u_{i2}\right|,
\end{equation*}
which gives that the top row of \( \sqrt{D^{-1}}R_{-s}B \cdot U \) contains only positive elements. Similarly, the top row of \(V' = \sqrt{D^{-1}}R_{-t}C \cdot V \) contains only positive elements.

\item \textbf{The trace is non-negative:}
We will prove a Lemma that finalises the Theorem statement:
\begin{lemma}
Let $n \geqslant 3$ be an integer, let $U,V$ be $n \times n$ matrices, with the first row containing only positive elements.

Let
\begin{equation*}
M = 
\begin{pmatrix}
  m_1 & 0  & \cdots & 0 \\
  0 & -m_2  & \cdots & 0 \\
  \vdots & \vdots  & \ddots & \vdots \\
  0 & 0 & \cdots & -m_n \\
\end{pmatrix}
\end{equation*}
be an $n \times n$ diagonal matrix, where all $m_i > 0.$ If the matrices 
$U^{T}MU, V^{T}MV$
have all diagonal elements equal to 0, and 
$U^{T}V$
is a diagonal matrix with all its diagonal elements positive, then 
\begin{equation*}
\Tr M \geqslant 0.
\end{equation*}
In the case where $n = 3$, the inequality becomes an equality.
\end{lemma}
\begin{proof}
Consider a bilinear form $M:\mathbb{R}^n \times \mathbb{R}^n \to \mathbb{R},$ defined by the matrix $M:$
\begin{equation*}
M(x,y):=m_1x_1y_1-m_2x_2y_2 -\dots-m_nx_ny_n.
\end{equation*}
Denote the columns of $U, V$ as \( \overline{u}_i, \overline{v}_i, i = 1, \dots, n.\) Then
\begin{equation*}
 M(\overline{u}_i,\overline{u}_i)=M(\overline{v}_i,\overline{v}_i)=0, \quad \forall i = 1, \dots, n.
\end{equation*}
First let's show that $M(\overline{u}_i,\overline{v}_i)\geqslant 0.$ Let $\overline{u}_i=(x_1,\dots,x_n), \overline{v}_i=(y_1,\dots,y_n),$ then
\begin{align*}
(m_1x_1y_1)^2=m_1x_1^2 \cdot m_1 y_1^2=(m_2x_2^2+\dots+m_nx_n^2)(m_2y_2^2+\dots+m_ny_n^2) \geqslant (m_2x_2y_2+\dots+m_nx_ny_n)^2.
\end{align*}
So, as $m_1, x_1, y_1 > 0,$ we have $M(\overline{u}_i,\overline{v}_i)\geqslant 0.$

Now, \( U^TV \) is an invertible diagonal matrix (all its diagonal elements are positive), so both \( U \) and \( V \) are invertible. Let's choose some unit vector $e.$ We can write:
\begin{equation}
\overline{e} = x_1\overline{u}_1+\dots+x_n\overline{u}_n, \text{ for some }x_i \in \mathbb{R}, i = 1,\dots,n.
\label{eq:g}
\end{equation}
Then, multiplying both sides by \( \overline{v}_j \) for some \( j \):
\begin{equation*}
\overline{e}\overline{v}_j = x_j\overline{u}_j\overline{v}_j, \text{ or}
\end{equation*}
\begin{equation*}
x_j = \frac{\overline{e}\overline{v}_j}{\overline{u}_j\overline{v}_j}.
\end{equation*}
Now, multiplying both sides of \eqref{eq:g} by \( \overline{e}, \) we get:
\begin{equation*}
1=\overline{e}^2 = x_1\overline{e}\overline{u}_1+\dots+x_n\overline{e}\overline{u}_n = \frac{\overline{e}\overline{u}_1 \cdot \overline{e}\overline{v}_1}{\overline{u}_1\overline{v}_1}+\dots+\frac{\overline{e}\overline{u}_n \cdot \overline{e}\overline{v}_n}{\overline{u}_n\overline{v}_n}.
\end{equation*}

This means that:

\begin{equation}
\Tr M = \frac{M(\overline{u}_1,\overline{v}_1)}{\overline{u}_1\overline{v}_1} + \dots + \frac{M(\overline{u}_n,\overline{v}_n)}{\overline{u}_n\overline{v}_n}.
\end{equation}

As $\overline{u}_i\overline{v}_i>0,$ we have $\Tr M \geqslant 0.$
\end{proof}

\item \textbf{In the case where $n = 3$ equality holds:} The equality in this case is equivalent to the equation \eqref{spherical}.





\end{enumerate}
\end{proof}
\section{An $n$-dimensional Grace-Danielsson Inequality}

\begin{corollary}
\label{cor:sphgd}
For a spherical simplex $U$ with $n+1$ vertices, let $\beta, \Gamma$ be its radii of the circumscribed and inscribed spheres, and $\alpha$ be the distance between their centers. Let
\begin{align*}
R:= \tan\beta, \quad r:= \tan\Gamma, \quad d:= \tan\alpha.
\end{align*}
Then
\begin{equation*}
(R-nr)(R+(n-2)r)> d^2.
\end{equation*}
\end{corollary}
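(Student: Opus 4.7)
The plan is to apply Theorem \ref{th:main} to $U$ together with its polar simplex $V$, and then rearrange algebraically.

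Since $U$ has $n+1$ vertices (so lives naturally in $S^n$), I would invoke Theorem \ref{th:main} with the substitution $n \mapsto n+1$, so the bound on the right becomes $(n+1)-2 = n-1$. Theorem \ref{adjacent}, applied once in each direction, identifies the circumcenter of the polar $V$ with the incenter of $U$, and the circumradius of $V$ with $\frac{\pi}{2} - \Gamma$. In the notation of Theorem \ref{th:main} this gives $b = \tan\beta = R$, $c = \tan\!\left(\frac{\pi}{2} - \Gamma\right) = \cot\Gamma = \frac{1}{r}$, while the distance between the two circumcenters is exactly $\alpha$ (the distance between circumcenter and incenter of $U$). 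Theorem \ref{th:main} then yields
\begin{equation*}
(bc-1)^2 \cos^2\alpha - (b+c)^2 \sin^2\alpha \geqslant (n-1)^2.
\end{equation*}

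Substituting $b = R$, $c = 1/r$ and multiplying by $r^2$ gives
\begin{equation*}
(R-r)^2 \cos^2\alpha - (Rr+1)^2 \sin^2\alpha \geqslant r^2(n-1)^2.
\end{equation*}
Dividing by $\cos^2\alpha$, using $\sec^2\alpha = 1 + d^2$, and regrouping yields
\begin{equation*}
(R-r)^2 - r^2(n-1)^2 \geqslant d^2 \left[ (Rr+1)^2 + r^2(n-1)^2 \right].
\end{equation*}
A difference of squares factors the left-hand side as $(R-nr)(R+(n-2)r)$. Since $R, r > 0$ we have $(Rr+1)^2 > 1$, so the bracket on the right is strictly greater than $1$, and the strict inequality $(R-nr)(R+(n-2)r) > d^2$ follows.

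The only non-mechanical step is bookkeeping the hypotheses of Theorem \ref{th:main}: one needs $0 < \beta < \frac{\pi}{2}$ and $0 < \Gamma < \frac{\pi}{2}$ so that $b$, $c$ are positive and finite, and one should check that the polar $V$ is a bona fide spherical simplex in the sense of the earlier definitions. Beyond that, the corollary is a direct algebraic consequence of the main theorem obtained by specializing Theorem \ref{th:main} to the polar pair of a single simplex; I anticipate no essential difficulty.
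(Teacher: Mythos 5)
Your proposal is correct and follows essentially the same route as the paper: apply Theorem \ref{th:main} to $U$ and its polar (with $n+1$ vertices, hence the bound $n-1$), substitute $b=R$, $c=1/r$, convert $\alpha$ to $d$ via $\sec^2\alpha = 1+d^2$, and factor $(R-r)^2-(n-1)^2r^2$ as $(R-nr)(R+(n-2)r)$, with strictness extracted from $(Rr+1)^2>1$ exactly as the paper does. The only cosmetic difference is that you divide by $\cos^2\alpha$ and move the $d^2$ terms to the right, whereas the paper multiplies through by $r^2(1+d^2)$; the resulting inequalities are identical rearrangements.
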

\begin{proof}
Let $\gamma:= \frac{\pi}{2}-\Gamma,$ so $\tan\gamma = \frac{1}{\tan\Gamma} = \frac{1}{r}.$ As $\gamma$ is the circumscribed sphere radius of a simplex polar to $U,$ we from Theorem \ref{th:main} have:
\begin{equation*}
\sqrt{ \left(R\cdot\frac{1}{r}-1\right)^2  \cos ^ 2 \alpha - \left(R + \frac{1}{r}\right)^2 \sin^2 \alpha} \geqslant n - 1.
\end{equation*}

As $d = \tan\alpha,$ we have:
\begin{align*}
\cos^2\alpha = \frac{1}{1+d^2}, \quad \sin^2\alpha = \frac{d^2}{1+d^2},
\end{align*}
and so
\begin{equation*}
\sqrt{ \left(R\cdot\frac{1}{r}-1\right)^2  \frac{1}{1+d^2} - \left(R + \frac{1}{r}\right)^2 \frac{d^2}{1+d^2}} \geqslant n - 1.
\end{equation*}

Taking the square and multiplying both parts by $r^2(1+d^2)$ we get:
\begin{equation*}
(R-r)^2-(1+Rr)^2d^2\geqslant(n-1)^2(1+d^2)r^2.
\end{equation*}
As $1+Rr>1, 1+d^2>1$ it leads to:
\begin{equation*}
(R-r)^2-d^2>(n-1)^2r^2, \text{ or}
\end{equation*}
\begin{equation*}
(R-nr)(R+(n-2)r)> d^2.
\end{equation*}
\end{proof}

 Notice that Corollary
\ref{cor:sphgd} is proven for a spherical simplex in $S^{n}$ of radius 1, but actually holds for a spherical simplex on a sphere of any radius. We will now make these spheres very large.

\begin{theorem}
\label{th:ecproof}
 \textbf{(Egan conjecture; an $n$-dimensional Grace-Danielsson inequality.)} 
Consider a Euclidean simplex $U$ in $\mathbb{R}^n$ with $n+1$ vertices. Let the radii of its circumscribed and inscribed spheres
be $R$ and $r$ respectively, and the distance between their centers be $d.$ Then the following inequality holds:
\begin{equation*}
(R-nr)(R+(n-2)r)\geqslant d^2.
\end{equation*}
\end{theorem}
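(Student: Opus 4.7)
The plan is to derive the Euclidean inequality as a flat-geometry limit of the spherical inequality in Corollary \ref{cor:sphgd}. The remark immediately preceding the theorem --- that the corollary holds on a sphere of any radius, since the inequality is expressed purely in terms of scale-invariant angular quantities --- points directly to this strategy: embed the fixed Euclidean simplex onto a sphere of radius $\rho$ and let $\rho \to \infty$.

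Fix the Euclidean simplex $U \subset \mathbb{R}^n$ with circumradius $R$, inradius $r$, and circumcenter--incenter distance $d$. For each large $\rho > 0$, realize $\mathbb{R}^n$ as the tangent hyperplane to the sphere $S^n_\rho \subset \mathbb{R}^{n+1}$ of radius $\rho$ at a chosen ``north pole'', and radially project the vertices of $U$ onto $S^n_\rho$ to obtain a spherical simplex $U_\rho$. For $\rho$ sufficiently large, the projection is a smooth diffeomorphism near $U$, so $U_\rho$ is a bona fide spherical simplex, admitting both a circumscribed and an inscribed spherical cap by proximity to $U$. Let $\beta_\rho$, $\Gamma_\rho$, and $\alpha_\rho$ denote, respectively, its geodesic circumradius, inradius, and center-to-center distance.

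Next I would establish the convergence $\beta_\rho \to R$, $\Gamma_\rho \to r$, $\alpha_\rho \to d$ as $\rho \to \infty$. This follows because the induced metric on $U_\rho$ agrees with the flat metric on $U$ up to an $O(\rho^{-2})$ perturbation, and the circumscribed and inscribed spheres of a nondegenerate simplex depend continuously (indeed smoothly) on the ambient metric --- this can be made precise via the implicit function theorem applied to the defining equations of the circumcenter and incenter, treating $\varepsilon = 1/\rho^2$ as a small parameter. Combined with the uniform expansion $\rho \tan(x/\rho) = x + O(\rho^{-2})$ on bounded sets, this gives
\[
\rho \tan(\beta_\rho/\rho) \to R, \qquad \rho \tan(\Gamma_\rho/\rho) \to r, \qquad \rho \tan(\alpha_\rho/\rho) \to d.
\]
Applying Corollary \ref{cor:sphgd} to $U_\rho$ on the sphere of radius $\rho$ (where the angular tangents in the corollary's statement become $\tan(\beta_\rho/\rho)$ etc., and scaling both sides of the inequality by $\rho^2$) yields the strict inequality
\[
\bigl(\rho\tan(\beta_\rho/\rho) - n\rho\tan(\Gamma_\rho/\rho)\bigr)\bigl(\rho\tan(\beta_\rho/\rho) + (n-2)\rho\tan(\Gamma_\rho/\rho)\bigr) > \rho^2 \tan^2(\alpha_\rho/\rho).
\]
Passing to $\rho \to \infty$ turns this into $(R - nr)(R + (n-2)r) \geqslant d^2$, where the inequality is weak because strictness can degrade under a limit --- which is precisely what the theorem claims (and is known to be tight, e.g., on the regular simplex where both sides vanish).

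The main obstacle is the geometric convergence claim, specifically that the spherical circumcenter and incenter of $U_\rho$ converge to their Euclidean counterparts on $U$ rather than to some other geometrically distinct centers. The cleanest route is to parametrize the problem by $\varepsilon = 1/\rho^2$ and observe that at $\varepsilon = 0$ the circumscribed- and inscribed-sphere systems reduce to the standard Euclidean systems, which have isolated, transverse solutions; smooth dependence on $\varepsilon$ then gives the desired convergence rates. Once this analytic input is in place, the rest of the argument is routine.
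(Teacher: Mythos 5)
Your proposal is correct and follows essentially the same strategy as the paper: apply Corollary~\ref{cor:sphgd} on spheres of growing radius and pass to the flat limit, with strictness degrading to the weak inequality. The only differences are in execution: the paper keeps the vertices of $U$ fixed and chooses the sphere $S_H$ (centered at a point $P$ at height $H$ above the circumcenter) to pass exactly through them, and then verifies $I_H\to I$ and $r_H\sqrt{H^2+R^2}\to r$ by an explicit barycentric-coordinate computation using the bisector of the cone from $P$, whereas you radially project onto a tangent sphere and justify the convergence of the spherical circumcenter and incenter by smooth dependence on the metric via the implicit function theorem. Both routes work; the paper's is more elementary and self-contained, while yours outsources the convergence to a soft-analysis argument that you would still need to flesh out (in particular the nondegeneracy of the Euclidean circumcenter and incenter systems at $\varepsilon=0$).
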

\begin{proof}
\begin{enumerate}
\item \textbf{Construct an $n$-dimensional sphere:}
 Let the circumscribed and inscribed spheres of  $U$ have centers $O$ and $I$ respectively.
Let's consider such space $\mathbb{R}^{n+1},$ that $\left<U\right>$ is a hyperplane in it.
Set the origin at $O$.
Let the vertices of the simplex $U$ be $u_1, \dots, u_{n+1} \in \left< U\right> \subset \mathbb{R}^{n+1}.$ So $|u_i| = R, i = 1,\dots,n+1.$
Choose a unit vector $\overline{e},$ such that $\overline{e} \perp  \left<U\right>.$

Let point $P$ be such that $\overline{OP} = H\overline{e},\quad H\to\infty.$

Let $u'_i:= u_i+\overline{PO}.$
As $\overline{OP} \perp u_i,$ all distances from $P$ to $u_i$ are the same:

\begin{equation*}
|u_i'| = \sqrt{H^2+R^2}, \quad \forall i = 1,\dots,n+1.
\end{equation*}

Now consider an $n$-dimensional sphere $S_H,$ with the center $P$ and radius $\sqrt{H^2+R^2}.$ It contains all $u_1,\dots,u_{n+1}$.

Let's call $\{u'_i\}$ on the sphere $S_H$ the spherical simplex $U'$ (it is a non-degenerate spherical simplex). We will call its centers of circumscribed and inscribed spheres $O_H, I_H.$ Their radii $\beta_H, \Gamma_H.$ We will call the spherical distance between $O_H$ and $I_H$ to be $\alpha_H.$ Also:
\begin{equation*}
R_H:=\tan \beta_H, \quad r_H:=\tan\Gamma_H, \quad d_H:=\tan\alpha_H.
\end{equation*}

\item\textbf{$O_H$ converges to $O$:}
By construction we can write:
\begin{align*}
|O_HO| & = \sqrt{H^2+R^2}-H \to 0 \\
R_H & = \tan\beta_H = \frac{R}{H},
\end{align*}
which also leads to 
\begin{equation}
R_H \sqrt{H^2+R^2} \to R \text{, as } H \to \infty.
\label{ineq:conv1}
\end{equation}

\item\textbf{$I_H$ converges to $I$:}
Denote all full-dimensional faces of $U$ as $F_i,$ and their $n-1$-dimensional volumes as $V_i:$
\begin{equation*}
V_i:= \Vol_{n-1}(F_i).
\end{equation*}
Point $I$ belongs to $\left<U\right>$ and is equidistant from all $F_i.$
We can write the incenter's barycentric coordinates for a Euclidean simplex $U$ with $n+1$ vertex:

\begin{equation*}
I 
= \frac{\sum\limits_{i = 1}^{n+1}V_i u_i}{\sum\limits_{i = 1}^{n+1} V_i}.
\end{equation*}

Point $I_H$ belongs to the sphere $S_H$ and is equidistant from all of $\left<F_i,P\right>$ ($n$-dimensional hyperplanes).
Consider an $n+2-$vertex Euclidean simplex $\{U,P\}$, and the point $L_H$ at the face $U,$ which is equidistant from all of $\left<F_i,P\right>.$
It is the intersection of the bisector of angle $P$ with face $U.$ We can write the barycentric coordinates of $L_H$:
\begin{equation*}
L_H = \frac{\sum\limits_{i = 1}^{n+1} \Vol_{n}(\Conv(F_i,P)) u_i}{\sum\limits_{i = 1}^{n+1} \Vol_n(\Conv(F_i,P))}.
\end{equation*}
Denoting the distances from $O$ to $F_i$ by $d_i$ we can continue:
\begin{equation*}
\Vol_{n}(\Conv(F_i,P)) = \frac{\Vol_{n-1}(F_i) \sqrt{H^2+d_i^2}}{n} = \frac{V_i \sqrt{H^2+d_i^2}}{n}
\end{equation*}

\begin{equation}
L_H = \frac{\frac{1}{n}\sum\limits_{i = 1}^{n+1} V_i \sqrt{H^2+d_i^2} u_i}{\frac{1}{n}\sum\limits_{i = 1}^{n+1} V_i \sqrt{H^2+d_i^2}} = 
\frac{\sum\limits_{i = 1}^{n+1} V_i u_i\sqrt{1+\frac{d_i^2}{H^2}} }{\sum\limits_{i = 1}^{n+1} V_i\sqrt{1+\frac{d_i^2}{H^2}} }\to I, \text{ as } H \to \infty.
\label{eq:lhtoi}
\end{equation}

We can estimate $|PL_H| \geqslant H.$ As $|PI_H| = \sqrt{H^2 + R^2},$ and $L_H \in [PI_H],$ the following holds:

\begin{equation}
|L_HI_H| \leqslant \sqrt{H^2+R^2} - H \to 0, \text{ as } H \to \infty.
\label{eq:lhtoih}
\end{equation}
Thus, from the convergence conditions \eqref{eq:lhtoi}, \eqref{eq:lhtoih}:
\begin{equation*}
I_H \to I, \text{ as } H \to \infty.
\end{equation*}

We now want to write a convergence condition also for $r_H.$ From $L_H$ on bisector:

\begin{equation*}
\dist(L_H,\left<F_i,P\right>)=\frac{(n+1)\Vol_{n+1}(\{U,P\})}{\sum\limits_{i=1}^{n+1} \Vol_n(\Conv(F_i,P))}=
\frac{\Vol_{n}(U)H}{\frac{H}{n}\sum\limits_{i = 1}^{n+1} V_i\sqrt{1+\frac{d_i^2}{H^2}} } \to
\frac{n\Vol_{n}(U)}{\sum\limits_{i = 1}^{n+1} V_i  }=r.
\end{equation*}
From $|L_HI_H|\to 0$ we also have 
\begin{equation*}
\dist(I_H,\left<F_i,P\right>)\to r.
\end{equation*}
As $|PI_H|=\sqrt{H^2+R^2},$
\begin{equation*}
\dist(I_H,\left<F_i,P\right>)=\sqrt{H^2+R^2}\sin \Gamma_H, \text{ and }
\end{equation*}
\begin{equation*}
\sqrt{H^2+R^2}\sin\Gamma_H\to r \text{, so}
\end{equation*}
\begin{equation}
r_H\sqrt{H^2+R^2}=\sqrt{H^2+R^2}\tan \Gamma_H \to r.
\label{ineq:conv2}
\end{equation}

\item\textbf{Euclidean inequality is a limit of spherical inequalities:}

From Corollary
\ref{cor:sphgd}:
\begin{equation*}
(R_H-nr_H)(R_H+(n-2)r_H) > d_H^2, \quad \forall H.
\end{equation*}
Multiplying by $H^2+R^2$ we get
\begin{equation*}
\left(R_H\sqrt{H^2+R^2}-nr_H\sqrt{H^2+R^2}\right)\left(R_H\sqrt{H^2+R^2}+(n-2)r_H\sqrt{H^2+R^2}\right) >(H^2+R^2)d_H^2, \quad \forall H.
\end{equation*}
From the convergence conditions \eqref{ineq:conv1}, \eqref{ineq:conv2} the left part converges to $(R-nr)(R+(n-2)r).$

Let's estimate the right part:
\begin{equation*}
d_H\sqrt{H^2+R^2} > |O_HI_H|.
\end{equation*}
As $O_H \to O, I_H \to I, $ then $ |O_HI_H| \to |OI|=d.$ 

So, 
\begin{equation*}
(R-nr)(R+(n-2)r)\geqslant d^2.
\end{equation*}

\end{enumerate}
\end{proof}
\bibliography{root} 
\bibliographystyle{ieeetr}

\end{document}